\documentclass[12pt]{article}




\usepackage{amsthm,amsmath,amssymb}
\usepackage{mathtools}
\usepackage{graphicx}
\usepackage[T1]{fontenc}


\usepackage[colorlinks=true,citecolor=black,linkcolor=black,urlcolor=blue]{hyperref}




\makeatletter

\makeatother
\newcounter{parentnumber}
\theoremstyle{plain}
\newtheorem{theorem}{Theorem}

\theoremstyle{lemma}

\theoremstyle{proposition}
\newtheorem{proposition}{Proposition}

\theoremstyle{corollary}
\newtheorem{corollary}{Corollary}

\theoremstyle{definition}
\newtheorem{definition}{Definition}

\newtheorem{problem}[theorem]{Problem}

\newtheorem{solution}{Solution}

\theoremstyle{remark}



 \title{   A Simple Algorithm for a Computationally Hard Problem  }

\author{Ameneh Farhadian \thanks{The contents of this paper are taken from the author's Ph.D. Thesis, Department of Mathematical Sciences, Sharif University of Technology supervised by Prof. Mahmoodian.
} \\
\\
\small Sharif University of Technology\\[-0.8ex]
\small   Tehran, I. R. Iran\\
\small\tt  September 11, 2019\\ }


\date{ \small Mathematics Subject Classifications:  05C85 , 05C60}

\begin{document}

\maketitle

\begin{abstract}
 Graph isomorphism problem is a known hard problem. In this paper, a randomized algorithm is proposed for this problem which is  very simple and fast. It solves the graph isomorphism problem with running time $O(n^{2.373})$ for any pair of $n$-vertex graphs which are not strongly co-det.

 \bigskip\noindent
 \textbf{Keywords:}
 randomized algorithm, NP-hard problem, permutation similar matrices, graph isomorphism problem,  determinant,  polynomial time algorithm
\end{abstract}

\section {Introduction}

In this paper, any matrix  is a $n$-by-$n$ matrix.
$I$ and $J$  denote the identity matrix and all one matrix, respectively.  A \textit{diagonal matrix} is a matrix in which the entries outside the main diagonal are all zero. A \textit{scalar matrix} is a special kind of diagonal matrix. It is a diagonal matrix with equal-valued elements along the diagonal. A \textit{block diagonal} matrix is a square diagonal matrix in which the diagonal elements are square matrices of any size, and the off-diagonal elements are 0.  A \textit{random matrix} is a matrix-valued random variable, i.e. a matrix in which some or all elements are random variables. Determinant of  matrix $A$ is denoted by $\det(A)$.
A \textit{permutation matrix} $P$ is a matrix obtained by permuting the rows of an identity matrix according to some permutation of the numbers 1 to $n$.
Two $n$-by-$n$ matrices $A$ and $B$ are \textit{similar}, if there exists an invertible matrix $S$ such that $A=SBS^{-1}$.
Matrices $A$ and $B$ are \textit{permutation similar}, if there exists a permutation matrix $P$ such that $A=PBP^t$. Let $A$ be a square matrix. We define $\rm{diag}(A)$ as a square diagonal matrix  with the  main diagonal  of  matrix $A$. 

Suppose that two matrices are given. We want to decides whether they are permutation similar. In other words, whether there is a bilateral bijection between the rows (and columns) of the two matrices. To check this fact, one should consider each of the $n!$ bijections between rows of two matrices and check whether two matrices are the same by that permutation. If they are not permutation-similar, one would need to check all $n!$ bijections to realize this fact. But, even for relatively small values of $n$, the number $n!$ is unmanageably large.

The  graph isomorphism problem is a special case of this problem. Two graphs are isomorphic if and only if their adjacency matrices are permutation-similar.
The graph isomorphism problem is a known hard problem. Although there exist  efficient algorithms for this problem in practice, but no polynomial time algorithm is known for this problem.  The best existing algorithm for this problem has exponential run time, i.e. $2^{O( \sqrt{n \log {n} }) }$ \cite{babai1983canonical}. The most recent work on this problem \cite{babai2016graph} shows that there exists an algorithm with running time $ 2^{O((\log n)^{c})} $ for some fixed $ c>0$. But, its proof has not been fully peer-reviewed yet.

In this paper, we prove that there exists a suitable function when applied to matrices, the determinant of the obtained matrices determines that whether  two matrices are permuationa-similar or not. Therefore, checking the isomorphism of two graphs is  reduced to  computation of  determinant of adjacency matrices which can be computed in time $O(n^{2.373})$.
\section{The main idea}
To warm up, first we deal with a preliminary problem. The solution of this  problem illuminates  our approach to the main problem.
\begin{problem}\label{p1}
Suppose that two persons have two square matrices $A$ and $B$ as their secrets. We want to decide whether $ A=B$ or not. They tell us only the determinant of their matrices. Also, they do any function that we want on their matrices. But, they tell us just the determinant of the resulted matrix.  Can we decide whether $A=B$ or not?
\end{problem}
\begin{solution}
We choose an arbitrary matrix $ X$ (which is not a scalar matrix) and ask them to add $X$ to their matrices, i.e. $A$ and $B$. Clearly, matrices $A+X$ and $B+X$ are the same, if $A=B$. In opposite, if $A \neq B$, then $A+X$ and $B+X$ are two  different matrices. We show that $\mathbb {P}\lbrace \det(A+X)=\det(B+X) \rbrace=0$ for randomly chosen matrix $X$.
We define function $F=\det(A+X)-\det(B+X)$. Clearly, function F is a multivariate polynomial in terms of  elements of matrix $X$, say $x_{1,1}, x_{1,2}, \cdots, x_{n,n}$. The coefficients of this multivariate polynomial is a function of elements of matrices $A$ and $B$. If $A=B$, then $F\equiv 0$. If $A\neq B$, then $F $ is a non-zero multivariate polynomial. According to  Schwartz-Zippel lemma \cite{schwartz1980fast,zippel1979probabilistic} the probability that the  randomly chosen $x_{1,1}, \cdots ,x_{n,n}$ is a root of $F$ is zero. Therefore, if $A \neq B$ we have  $\mathbb {P}\lbrace \det(A+X)=\det(B+X) \rbrace=0$ for randomly chosen matrix $X$. \\
According to the above explanation, for a randomly chosen matrix $X$, if $\det(A+X) \neq \det(B+X)$, then we can  certainly deduce  $A\neq B$. If $\det(A+X) = \det(B+X)$, then we deduce that $A=B$ with probability 1. Because, the probability that two randomly chosen matrices share the same eigenvalues is zero.  The diagram of this algorithm is depicted in Fig. \ref{p1}.\end{solution}

\begin{figure}[ht]
\centerline{\includegraphics[width=7.8cm]{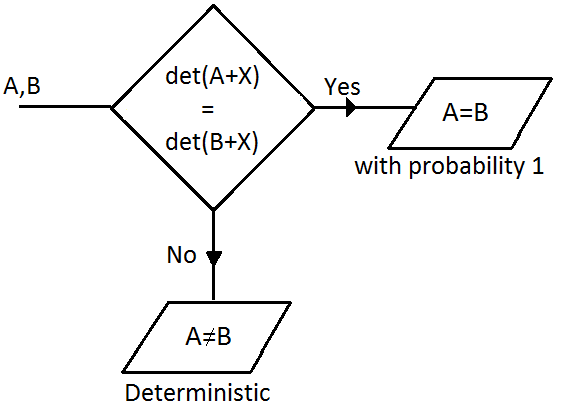}}
\caption{\label{p1}\small Deciding whether two matrices $A$ and $B$ are equal. $X$ is a random matrix. }
\end{figure}
\begin{problem}\label{p2}
 Two matrices $A$ and $B$ are given. We want to decide whether $A$ and $B$ are permutation-similar, i.e. is there a permutation matrix $P$ such that $A=PBP^{t}$? Similar to  Problem \ref{p1}, we are restricted to compare the eigenvalues of the matrices.
\end{problem}

We want  to solve  Problem \ref{p2} similar to Problem \ref{p1}. 
Finding a solution for Problem 2 is important. Because, deciding whether two matrices are permutation-similar is a hard problem, that is in worst cases all possible permutations should be checked which needs exponential time.
 Additionally, the graph isomorphism problem, which is a known hard problem, is a special case of this problem. Therefore, finding a fast and easy algorithm to solve this problem  is valuable. In the  graph isomorphism problem, the matrices $A$ and $B$ are  zero-one symmetric matrices. 
As it is depicted in Fig. \ref{p2}, we need   a suitable function $f()$.
\begin{figure}[ht]
\centerline{\includegraphics[width=7.8cm]{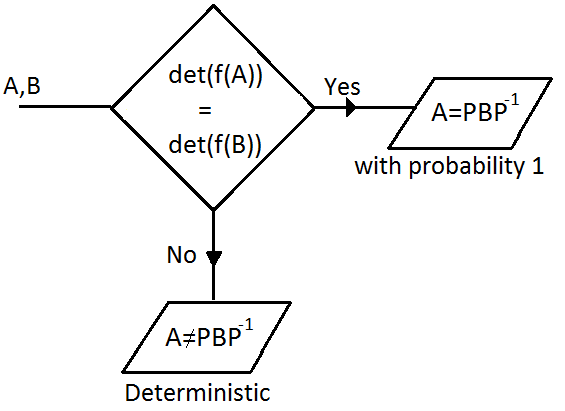}}
\caption{\label{p2}\small A simple algorithm to decide whether two matrices $A$ and $B$ are permutation-similar}
\end{figure}
\section{Looking for a suitable function}
Here, we look for a suitable function $f$ for the algorithm depicted in Fig \ref{p2}.
Two matrices $A$ and $B$ are given. We should decide whether two matrices $A$ and $B$ are permutation-similar. We require  a function $f()$ acting on matrices $A$ and $B$ such that $\det(f(A))=\det(f(B))$ holds with probability 0, if two matrices $A$  and $B$ are not permutation-similar.
In addition, function $f()$ should be such that
$\det(f(A))=\det(f(B))$ if $A$ and $B$ are permutation-similar.
 Therefore, the desired function $f()$ should satisfy  the following two properties,
\begin{itemize}
\item Property 1:
 $\det(f(A))=\det(f(B))$, if there exists a permutation matrix $P$ such that $A=PBP^t$,\\
\item Property 2:   $ \mathbb{P} \lbrace \det(f(A)) = \det(f(B)) \rbrace =0$, if  there is not any permutation matrix such that $A= PBP^{t}$.
\end{itemize}

  Is it possible to find such function $f()$?  Clearly, a polynomial function satisfy in the first property, but it does not satisfy  the second one. The function of adding a random matrix, i.e. $f(A)=A+X$, similar to what we have done for  Problem \ref{p1}, satisfies the second property, but it does not satisfy the first one.
Now, we look for a function which satisfies both of the above mentioned properties.
Similar to the solution of Problem 1, we  want to add a random matrix $X$ to matrices $A$ and $B$ such that the equality of their determinant is preserved if they are permutation similar. In opposite, if they are not permutation similar, we have  $\mathbb{P} \lbrace \det(A+X)=\det(B+X)\rbrace=0$.

In  Problem \ref{p1}, it can be easily checked that  if $X$ is a random diagonal matrix, then, again,  the result remains the same. Here, we  suggest a random diagonal matrix to be added to matrices $A$ and $B$.
%
%
 Let $c$ be a real number and  $J$ be all one matrix. Clearly, any entry of matrix $(A+cJ)^k $ for integer $k>2$ is a function of all entries of $A$. If $A$ is a block diagonal  matrix, then it is possible that some entries of $A^k$ does not depend on all entries of $A$.
 Thus, $cJ$ is added to $A$ to be sure that $A+cJ$ is not a block diagonal matrix and any element of   $(A+cJ)^k $ is a function of all entries of $A$.
\begin{proposition}\label{cJ}
 Two square matrices $A$ and $B$ are permutation similar, if and only if $A+cJ$ and $B+cJ$  are permutation similar for any arbitrary  $c$.
\end{proposition}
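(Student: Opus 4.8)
The plan is to show both directions by exhibiting that the conjugating permutation matrix can be taken to be the same on both sides, using the fact that $cJ$ is fixed by every permutation. First I would observe the key identity: for any permutation matrix $P$, we have $PJP^t = J$, since $J$ has all entries equal and permuting rows and columns of $J$ just rearranges identical entries. Equivalently, $J$ commutes with every permutation matrix in the sense $PJ = JP$ (both equal the all-ones matrix scaled appropriately), and since $PP^t = I$, it follows that $PJP^t = J$.

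For the forward direction, suppose $A$ and $B$ are permutation similar, so $A = PBP^t$ for some permutation matrix $P$. Then
\[
P(B+cJ)P^t = PBP^t + c\,PJP^t = A + cJ,
\]
so $A+cJ$ and $B+cJ$ are permutation similar via the same $P$. For the converse, suppose $A+cJ$ and $B+cJ$ are permutation similar, say $A+cJ = P(B+cJ)P^t$ for some permutation matrix $P$. Using $PJP^t = J$ again, the right-hand side equals $PBP^t + cJ$, hence $A + cJ = PBP^t + cJ$, and subtracting $cJ$ gives $A = PBP^t$. Thus $A$ and $B$ are permutation similar via the same $P$.

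There is essentially no obstacle here; the only thing to be careful about is the elementary verification that $PJP^t = J$ for every permutation matrix $P$, which is where the all-ones structure of $J$ (as opposed to an arbitrary matrix) is used — this is exactly why the proposition is stated for $cJ$ rather than for adding an arbitrary fixed matrix. The statement holds for every scalar $c$, including $c=0$ trivially, and the argument is symmetric in $A$ and $B$, so no case analysis is needed.
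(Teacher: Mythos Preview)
Your proof is correct and complete; the key observation $PJP^{t}=J$ for any permutation matrix $P$ is exactly what is needed, and both directions follow immediately. The paper itself states this proposition without proof, treating it as an elementary observation, so your argument fills in precisely the verification the paper omits.
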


Therefore, without loose of generality, we can assume that matrices $A$ and $B$ are not block diagonal matrices. It means that any entry of $A^k$ is a function of  all entries of $A$ for any integer $k>2$.

Assuming $q(x)=\sum_{i=1}^nc_ix^i$, matrix ${\rm{diag}}(q(A))$ is a diagonal matrix in which any diagonal entry is a multivariate polynomial in terms of all entries of $A$ with randomized coefficients $c_i$. We show that matrix ${\rm{diag}}(q(A))$ can play the same role of  matrix $X$ in the solution of Problem \ref{p1}, except some special cases.

\begin{definition}\label{sc}
Let $A$ and $B$ be two matrices which are not permutation-similar.  We say $A$ and $B$ are \textit{co-det}, if $\det(A)=\det(B)$. We say $A$ and $B$ are \textit{strongly co-det}, if for any arbitrary finite order polynomial $q(x)=\sum c_ix^i$ and real $c$, we have
$$\det(q(A+cJ)-{\rm{diag}}(q(A+cJ))=\det(q(B+cJ)-{\rm{diag}}(q(B+cJ)))$$
\end{definition}
We know that  any matrix satisfies in its characteristic polynomial and the degree of  characteristic polynomial of an $n$-by-$n$  matrix is  $n$. Thus, any polynomial in terms of an $n$-by-$n$ matrix can be reduced to a polynomial with degree at most $n$. Therefore, it  is sufficient to suppose that the degree of polynomial $q()$ in the above definition  is at most $n$ where $n$ is the size of matrices $A$ and $B$. \\
 According to the above definition, the strongly co-det matrices are very special matrices. Generally, $A^k$ and $\rm{diag}(A^k)$ are two different matrices with different eigenspace. Thus, for any integer $k$, $A^k-\rm{diag}(A^k)$ is a matrix whose eigenspace is neither  the eigenspace of $A^k$ nor  the eigenspace of $\rm{diag}(A^k)$ in general. Therefore, the strongly co-det matrices which share the same determinant for$A^k-\rm{diag}(A^k)$ for any integer $k$, have  special structure.

\begin{theorem}\label{diagf} Suppose that  $c, c_1, \cdots, c_n \in \mathbb{R} $ are selected at random. We define function $f()$ acting on $n$-by-$n$ matrices, by
$$f(A)=\sum_{i=1}^{n}c_i(A+cJ)^i-{\rm{diag}}(\sum_{i=1}^{n}c_i(A+cJ)^i)$$
 Function $f$ satisfies both properties 1 and 2, except the case that two matrices $A$ and $B$  are strongly co-det.
\end{theorem}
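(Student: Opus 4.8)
The plan is to verify Properties 1 and 2 separately, using Proposition \ref{cJ} to handle the $cJ$-shift and the Cayley--Hamilton reduction already noted to bound the degree of $q$. For Property 1, suppose $A = PBP^t$ for a permutation matrix $P$. Then $(A+cJ)^i = P(B+cJ)^iP^t$ for every $i$, since $PJP^t = J$ and conjugation commutes with taking powers and with linear combinations; hence $q(A+cJ) = P\,q(B+cJ)\,P^t$ for $q(x)=\sum_i c_i x^i$. Because $P$ is a permutation matrix, conjugation by $P$ permutes the diagonal entries among themselves, so ${\rm diag}(q(A+cJ)) = P\,{\rm diag}(q(B+cJ))\,P^t$ as well. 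Subtracting, $f(A) = P f(B) P^t$, so $f(A)$ and $f(B)$ are permutation similar, and in particular $\det(f(A)) = \det(f(B))$. This direction is routine.

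The substance is Property 2. Fix $A$ and $B$ that are \emph{not} permutation similar and \emph{not} strongly co-det. I would treat $c, c_1,\dots,c_n$ as indeterminates and set
$$G(c,c_1,\dots,c_n) \;=\; \det\!\big(f(A)\big) - \det\!\big(f(B)\big),$$
where $f(A) = q(A+cJ) - {\rm diag}(q(A+cJ))$ with $q(x) = \sum_{i=1}^n c_i x^i$. Each entry of $f(A)$ is a polynomial in $c, c_1,\dots,c_n$ (the entries of $A$ and $B$ are fixed reals), so $G$ is a polynomial in these $n+1$ variables. If $G \not\equiv 0$, then by the Schwartz--Zippel lemma a uniformly random choice of real $c, c_1,\dots,c_n$ is a root of $G$ with probability $0$, which is exactly Property 2. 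So the whole claim reduces to showing: $G \equiv 0$ forces $A$ and $B$ to be strongly co-det. But this is essentially the definition — if $G \equiv 0$ then $\det(q(A+cJ) - {\rm diag}(q(A+cJ))) = \det(q(B+cJ) - {\rm diag}(q(B+cJ)))$ holds identically in $c$ and the $c_i$, and since every polynomial of finite order in an $n\times n$ matrix reduces modulo the characteristic polynomial to one of degree at most $n$, this identity over all degree-$\le n$ polynomials $q$ and all real $c$ is precisely the strongly co-det condition of Definition \ref{sc}. Contrapositively, not strongly co-det implies $G \not\equiv 0$, and we are done.

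The main obstacle — and the only place where care is genuinely needed — is the reduction of an \emph{arbitrary finite order} polynomial $q$ in Definition \ref{sc} to one of degree at most $n$, because the reduction of $q(A+cJ)$ via the characteristic polynomial of $A+cJ$ uses coefficients that themselves depend on $c$ and on the entries of $A$, and one must check that this does not break the simultaneous equality with the $B$-side (the characteristic polynomials of $A+cJ$ and $B+cJ$ need not coincide). The clean way around this is to observe that the strongly co-det property, and the polynomial $G$, should both be phrased in terms of the \emph{same} formal polynomial $q$ of degree at most $n$ applied to both matrices; the paper's own remark that "it is sufficient to suppose that the degree of polynomial $q()$ in the above definition is at most $n$" is exactly the statement that needs this justification, and I would make it explicit before invoking Schwartz--Zippel. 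Everything else is bookkeeping: confirming $f(A)$ has polynomial entries in the $c_i$ (immediate, as matrix powers and sums are polynomial), and confirming the probability-zero conclusion (immediate from Schwartz--Zippel once $G\not\equiv 0$).
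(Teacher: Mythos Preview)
Your proposal is correct and follows essentially the same route as the paper's own proof: for Property~1 you conjugate through $P$ using $PJP^t=J$, $(PXP^t)^i=PX^iP^t$, and ${\rm diag}(PXP^t)=P\,{\rm diag}(X)\,P^t$ to get $f(A)=Pf(B)P^t$; for Property~2 you form $G(c,c_1,\dots,c_n)=\det(f(A))-\det(f(B))$, argue it is a nonzero polynomial exactly when $A,B$ are not strongly co-det, and apply Schwartz--Zippel. This is precisely the paper's argument. Your extra paragraph flagging the Cayley--Hamilton reduction (that the characteristic polynomials of $A+cJ$ and $B+cJ$ may differ, so reducing a high-degree $q$ does not obviously yield the \emph{same} degree-$\le n$ polynomial on both sides) is a legitimate caution the paper glosses over in its remark after Definition~\ref{sc}; but since the paper simply takes that remark as given and your plan is to do the same while noting it needs justification, the two proofs are effectively identical.
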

\begin{proof}
First we show that function $f()$  satisfies property1. That is, given two matrices $A$ and $B$, if there exists permutation matrix $P$ such that $A=PBP^{t}$, then  $\det(f(A))=\det(f(B))$. We have
$$f(A)= \sum_{i=1}^{n}c_i(A+cJ)^i-{\rm{diag}}(\sum_{i=1}^{n}c_i(A+cJ)^i)$$
 If  there exists a permutation matrix $P$ such that $A=PBP^{t}$, then
$$f(A)=\sum_{i=1}^{n}c_i (PBP^{t}+cJ)^i-{\rm{diag}} (\sum_{i=1}^{n}c_i (PBP^{t}+cJ)^i)$$
Since $J=PP^tJ=PJP^t$ and $ (PXP^t)^i=P X^iP^t$,
$$f(A)=P(\sum_{i=1}^{n}c_i (B+cJ)^i)P^t-{\rm{diag}} (P(\sum_{i=1}^{n}c_i (B+cJ)^i)P^t)$$
 In addition, for any permutation matrix $P$ and matrix $A$, we have
 ${\rm{diag}}(PAP^{t})=P{\rm{diag}}(A)P^t$. Thus,
$$f(A)=P(\sum_{i=1}^{n}c_i(B+cJ)^i)P^t-P{\rm{diag}}(\sum_{i=1}^{n}c_i(B+cJ)^i)P^t$$
$$= P( \sum_{i=1}^n(B+cJ)-{\rm{diag}}(\sum_{i=1}^{n}c_i(B+cJ)^i))P^t=Pf(B)P^t$$
Since $\det(PXP^t)=\det(PP^tX)=\det(X)$, we conclude that $\det(f(A))=\det(f(B))$. It means that  function $f$  satisfies property 1.

Now, we show the satisfaction of property 2. Given two matrices $A$ and $B$ which are not strongly-co-det, we show that for randomly chosen values of $c_1, \cdots, c_n$ and $c$, $ \mathbb{P} \lbrace \det(f(A)) = \det(f(B)) \rbrace =0$, if there is no permutation matrix $P$ such that $A=PBP^t$. Clearly, the probability space is $\mathbb{R}^{n+1}$.
First, we define function $G$ as
$$ G(c,c_1, \cdots, c_n)=\det(f(A))-\det(f(B))$$
where $c_1,\cdots , c_n$ and $c$ are coefficients which have been used for definition of $f()$.
 Clearly, any element of matrix $f(A)$ is a summation of $c_i$'s. Consequently, $\det(f(A))$ is a multivariate polynomial in terms  of  $c, c_1, \cdots $ and $ c_n$.  We have assumed two matrices $A$ and $B$ are not strongly-co-det. Thus,  there exist a  polynomial $q_0()$ and a real number $c_0$ such that $\det(q_0(A+c_0J)-{\rm{diag}}(q_0(A+c_0J))) \neq \det(q_0(B+c_0J)-{\rm{diag}}(q_0(B+c_0J)))$. Thus, $G(c_0,c'_1, \cdots, c'_n) \neq 0$ where $c'_i$ are the coefficients of the polynomial $q_0()$. It means that $G$  is a non-zero multivariate polynomial.  The Schwartz-Zippel lemma \cite{schwartz1980fast,zippel1979probabilistic} states that for a finite order non-zero multivariate polynomial $F(x_1,\cdots,x_k)$, the probability that $F(r_1, \cdots,r_k)=0$ holds for  randomly chosen $r_1, \cdots,r_k$  is zero. Therefore, the probability that $ G(c, c_1, \cdots, c_n)=0 $  holds for randomly chosen values of $c,c_1, \cdots, c_n$ is zero. Thus, for randomly chosen coefficients $c , c_1 , \cdots$ and $c_n$ in the definition of function $f()$, we have
$$\mathbb {P} \lbrace \det(f(A))= \det(f(B)) \rbrace =0$$
, if $A$ and $B$ are not strongly-co-det and $A \neq PBP^t$.$ \square$

\end{proof}
%
\begin{corollary}
The computational complexity of checking that two matrices $A_1$ and $A_2$ are permutation-similar is equal to the computational complexity of checking  that $\det(f(A_1))=\det(f(A_2))$ holds, provided that $A_1$ and $A_2$ are not strongly co-det.
\end{corollary}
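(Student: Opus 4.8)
The plan is to read the corollary directly off Theorem~\ref{diagf}, by packaging the two implications it provides as a (randomized) polynomial-time reduction in one direction, and by observing that the map $f$ is itself cheap to evaluate, which handles the other direction. So I would first analyze the cost of forming $f(A)$, and then assemble the two reductions and reconcile them with the word ``equal''.

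For the cost of $f$: given $A$ and the chosen scalars $c,c_1,\dots,c_n$, the matrix $A+cJ$ is formed in $O(n^2)$ time, and the sum $\sum_{i=1}^{n}c_i(A+cJ)^i$ is built incrementally by maintaining a running power of $A+cJ$, updated with one matrix multiplication per step, and adding $c_i$ times it to an accumulator; subtracting the diagonal of the result then yields $f(A)$. This is $O(n)$ matrix multiplications, i.e. $O(n^{3.373})$ with the fast-multiplication exponent underlying the $O(n^{2.373})$ determinant algorithm used in the paper, after which the determinant itself costs $O(n^{2.373})$. In particular the test ``is $\det(f(A_1))=\det(f(A_2))$?'' is a polynomial-time computation, hence no harder than deciding permutation similarity; that is one of the two comparisons of complexities.

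For the converse direction I would use Theorem~\ref{diagf} as a reduction from permutation similarity to that same test: on input $A_1,A_2$ (assumed not strongly co-det), draw $c,c_1,\dots,c_n$ at random, form $f(A_1),f(A_2)$ with those coefficients, and output ``permutation similar'' exactly when $\det(f(A_1))=\det(f(A_2))$. Property~1 guarantees the answer is never a spurious ``not similar'', and Property~2 guarantees that a spurious ``similar'' occurs with probability $0$ under the strongly-co-det exclusion; so this is a correct randomized reduction whose only overhead beyond a single determinant-equality test is the $O(n^{3.373})$ cost of evaluating $f$ twice. Combining the two directions, the two problems coincide in complexity up to the polynomial cost of $f$, which is the assertion.

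The step I expect to need the most care is not any calculation but the bookkeeping hidden in ``equal''. The equivalence furnished by Theorem~\ref{diagf} is probabilistic and one-sided, so the honest statement is an equivalence of the \emph{randomized} decision problems (perfect soundness whenever the algorithm answers ``not similar''; zero error when it answers ``similar'' on inputs that are not strongly co-det), and the strongly-co-det hypothesis is genuinely load-bearing: without it Property~2 fails and the reduction may certify permutation similarity for matrices that are not permutation similar. I would therefore state these qualifications explicitly and phrase the conclusion as polynomial equivalence of the two (randomized) problems.
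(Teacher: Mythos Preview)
Your proposal is correct and follows the same route as the paper, which simply states the corollary as an immediate consequence of Theorem~\ref{diagf} without a separate proof. Your write-up is in fact more careful than the paper's, since you spell out the cost of evaluating $f$ and the one-sided, probabilistic nature of the equivalence that the word ``equal'' glosses over.
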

The problem of computing the determinant of a square matrix is a known problem. According to Theorem 6.6 of ref \cite{aho1974design}, the determinant of an $n$-by-$n$ matrix can be computed in $O(n^{2.373})$ steps.
\begin{corollary}
The problem of   graph isomorphism  is solvable in time  $O(n^{2.373})$ for any two $n$-vertex graphs  that their adjacency matrices are not strongly co-det.
\end{corollary}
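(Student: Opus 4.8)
The plan is to read the statement off as a direct specialization of Theorem~\ref{diagf} (equivalently, of the Corollary just above it) to adjacency matrices, combined with the known $O(n^{2.373})$ cost of a determinant evaluation; no new argument is needed, only the assembly of pieces already in hand. Concretely, let $G_1,G_2$ be the two $n$-vertex graphs and $A_1,A_2$ their adjacency matrices (symmetric zero-one matrices with zero diagonal). As recalled in the introduction, $G_1\cong G_2$ if and only if $A_1$ and $A_2$ are permutation-similar, so it suffices to test permutation similarity of $A_1$ and $A_2$ under the standing hypothesis that they are not strongly co-det. I would then invoke Theorem~\ref{diagf}: draw $c,c_1,\dots,c_n$ at random, form $f(A_j)=\sum_{i=1}^{n}c_i(A_j+cJ)^i-{\rm diag}\bigl(\sum_{i=1}^{n}c_i(A_j+cJ)^i\bigr)$ for $j=1,2$ (the shift by $cJ$ being harmless by Proposition~\ref{cJ}), and compare $\det(f(A_1))$ with $\det(f(A_2))$. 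By Property~1 these coincide whenever $G_1\cong G_2$; by Property~2, since $A_1,A_2$ are not strongly co-det, they differ with probability~$1$ whenever $G_1\not\cong G_2$. Hence the rule ``declare the graphs isomorphic exactly when $\det(f(A_1))=\det(f(A_2))$'' is correct with probability~$1$.

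It remains to bound the running time. Forming $A_j+cJ$ costs $O(n^2)$, extracting a diagonal costs $O(n)$, and the two determinant evaluations cost $O(n^{2.373})$ each by Theorem~6.6 of \cite{aho1974design}; subtraction and comparison are $O(n^2)$. The one step I would treat carefully is evaluating the matrix polynomial $\sum_{i=1}^{n}c_i(A_j+cJ)^i$: a naive Horner scheme uses $\Theta(n)$ $n\times n$ matrix multiplications and so costs $\Theta(n^{1+\omega})$, where $\omega\approx 2.373$ is the matrix-multiplication exponent, which would dominate rather than be dominated. To actually reach the advertised $O(n^{2.373})$ one should either argue that a polynomial $q$ of bounded (constant) degree already suffices for Property~2 --- the Cayley--Hamilton reduction quoted after Definition~\ref{sc} only brings the degree down to $n$ --- or evaluate the polynomial by a Paterson--Stockmeyer baby-step/giant-step scheme, landing on $O(n^{\omega+1/2})$; the route matching the Corollary above is to charge the whole test to its single dominant determinant comparison and inherit the $O(n^{2.373})$ figure from there.

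I do not expect a genuine obstacle: the corollary is a formal consequence of Theorem~\ref{diagf} and the determinant bound. The only delicate points --- and the ones I would be explicit about --- are exactly this cost accounting for the evaluation of $f$, and the caveats already built into Theorem~\ref{diagf}: correctness is asserted only for adjacency matrices that are not strongly co-det, and only ``with probability~$1$'' in the real-arithmetic Schwartz--Zippel model, so a fully constructive statement would replace the random reals by samples from a large finite set and carry an explicit small failure probability.
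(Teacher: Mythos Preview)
Your approach is exactly the paper's: the corollary is stated without proof, as an immediate consequence of the preceding corollary (permutation similarity is decided by comparing $\det(f(A_1))$ and $\det(f(A_2))$) together with the $O(n^{2.373})$ determinant bound cited from \cite{aho1974design}. So on the level of method you match the paper.

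Where you go further than the paper is in the cost accounting for evaluating $f$, and your concern is legitimate: the paper simply does not address it. Computing $\sum_{i=1}^{n}c_i(A+cJ)^i$ requires, even with Paterson--Stockmeyer, $\Theta(\sqrt{n})$ matrix multiplications, i.e.\ $\Theta(n^{\omega+1/2})$ arithmetic operations, which strictly exceeds $O(n^{2.373})$. The paper's implicit reasoning charges only the determinant step, so the stated $O(n^{2.373})$ bound is not actually justified by the argument given; your proposal correctly flags this rather than papering over it. There is no alternative proof in the paper that you are missing---the gap you identify is present in the original as well.
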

According to the above result, the computational complexity of the graph isomorphism is an open problem just for graphs with strongly co-det adjacency matrices.
Clearly, such graphs   have a special symmetric structure due to the definition. By manipulating their adjacency matrices, they will be not strongly co-det and the above simple algorithm can be applied for them, for instance by addition of a non-zero value to one element of the diagonal.  More study about strongly co-det matrices  is suggested for the future.

\bibliographystyle{plain}




\end{document}